\numberwithin{equation}{section}
\newtheorem{theorem}{Theorem}[section]
\newtheorem{definition}[theorem]{Definition}
\newtheorem{proposition}[theorem]{Proposition}
\newtheorem{remark}[theorem]{Remark}
\begin{document}

\title{An orthogonality relation for a thin family of $GL(3)$ Maass forms}

\author[Jo\~{a}o Guerreiro]{Jo\~{a}o Guerreiro}

\date{\today}


\begin{abstract}
We prove an orthogonality relation for the Fourier-Whittaker coefficients of a thin family of $GL(3)$ Maass forms containing all self-dual forms. This is obtained by analysing the Kuznetsov trace formula on $GL(3)$ for a certain family of test functions. The method also yields Weyl's law for the same family of Maass forms.
\end{abstract}

\maketitle



\section{Introduction}

A well known fact about Dirichlet characters is the following orthogonality relation
\begin{equation}
\sum_{\chi \bmod q} \chi(n) \overline{\chi(m)} =
\begin{cases}
\phi(q), \text{ if } n \equiv m \bmod q \\
0, \text{ otherwise.}
\end{cases}
\end{equation}
for integers $m, n$ coprime to $q$, where the sum on the left is over all characters$\pmod q$. Since Dirichlet characters can be viewed as automorphic representations of $GL(1,\mathbb{A}_\mathbb{Q})$, this result can be interpreted as the simplest case of the orthogonality relation conjectured by Zhou \cite{zhou} concerning Fourier-Whittaker coefficients of Maass forms on the space $SL_n(\mathbb{Z}) \backslash SL_n(\mathbb{R}) / SO_n(\mathbb{R})$, $n \geq 2$. This orthogonality relation conjectured by Zhou was proved by Bruggeman \cite{bruggeman} in the case $n=2$ and by Goldfeld-Kontorovich \cite{goldfeld} and Blomer \cite{blomer} in the case $n=3$. Versions of this result have applications to the Sato-Tate problem for Hecke operators, both in the holomorphic \cite{cdf}, \cite{serre} and non-holomorphic setting \cite{sarnak}, \cite{zhou}, as well as to the problem of determining symmetry types of families of L-functions \cite{goldfeld} as introduced in the work of Katz-Sarnak \cite{katzsarnak}. In this paper, we prove an orthogonality relation for the Fourier-Whittaker coefficients of a thin family of Maass forms on $SL_3(\mathbb{Z})$ which contains all self-dual forms. The main tool used is the Kuznetsov trace formula for $GL(3)$ developed by Blomer \cite{blomer} and Goldfeld-Kontorovich \cite{goldfeld}. The same methods yield a weighted Weyl's law for the same family of Maass forms.

Weyl's law was first proved by Selberg \cite{selberg} for Maass forms on $SL_2(\mathbb{Z})$ using the Selberg trace formula. Miller \cite{miller} later showed Weyl's law for Maass forms on the space $SL_3(\mathbb{Z}) \backslash SL_3(\mathbb{R}) / SO_3(\mathbb{R})$. This result has since been obtained in more general settings in \cite{lapidmuller}, \cite{lindenstrauss}, \cite{muller}. The version of Weyl's law presented here tells us that the family of Maass forms being studied has zero density in the set of all Maass forms on $SL_3(\mathbb{Z})$. We also note that a lower bound Weyl's law for self-dual forms on $SL_3(\mathbb{Z})$ follows from the Gelbart-Jacquet lift \cite{gelbartjacquet} and Weyl's law for Maass forms on $SL_2(\mathbb{Z})$. This work is a step towards a proof of the Gelbart-Jacquet lift by isolating the contribution of symmetric square lifts from $GL(2)$ in the $GL(3)$ Kuznetsov trace formula, in the spirit of Langlands' ``Beyond Endoscopy" \cite{langlands}.

Let $\phi_j$ ($j=0,1,2,\dots$), where $\phi_0$ is a constant function, be a set of orthogonal $GL(3)$ Hecke-Maass forms with spectral parameters $\nu^{(j)} = \left(\nu^{(j)}_1, \nu^{(j)}_2\right)$, and define $\nu^{(j)}_3 = -\nu^{(j)}_1-\nu^{(j)}_2$. The parameters are normalized such that for tempered forms they are purely imaginary.
For a particular non-constant Maass form $\phi$ with spectral parameters $\nu = (\nu_1, \nu_2)$ define $\nu_3 = -\nu_1-\nu_2$, and define Langlands parameters 
\[\alpha_1 = \nu_1 - \nu_3, \ \alpha_2 = \nu_2 - \nu_1, \ \alpha_3 = \nu_3 - \nu_2. \] The Laplace eigenvalue of $\phi$ is given by

\[ \lambda_{\phi} = 1 - 3(\nu_1^2 + \nu_2^2 + \nu_3^2) = 1 - (\alpha_1^2 +\alpha_2^2 + \alpha_3^2). \]
Let \[ \mathcal{L}_j = \text{Res}_{s=1} L(s, \phi_j \times \tilde{\phi_j}). \]
For $T \gg 1$, $\nu = (\nu_1, \nu_2) \in \mathbb{C}^2$ (with $\nu_3 = -\nu_1 -\nu_2$) and $R > 0$, to be fixed later, we define

\begin{equation}
\label{defh}
h_{T,R}(\nu) = \left( T^{\alpha_1^2/R^2} + T^{\alpha_2^2/R^2} + T^{\alpha_3^2/R^2} \right)^2 e^{2(\alpha_1^2 + \alpha_2^2 + \alpha_3^2)/T^2} \frac{\left(\prod \limits_{j = 1}^3 \Gamma\left(\frac{2+R+3\nu_j}{4} \right) \Gamma\left(\frac{2+R-3\nu_j}{4} \right)\right)^2}{\prod \limits_{j=1}^3 \Gamma\left(\frac{1+3\nu_j}{2} \right) \Gamma\left(\frac{1-3\nu_j}{2} \right)}.
\end{equation} 

This function is essentially supported on the region where $|\nu_1|, |\nu_2|, |\nu_3| \ll T$.
Let $\phi$ be tempered and $|\nu_1|, |\nu_2|, |\nu_3| < T$. Note that in this situation, $h_{T,R}(\nu)$ is real valued and positive. We estimate this function in three regions.
If one of the $\alpha_i$ is equal to $0$, then

\begin{equation} c_R^{(1)} [(1+|\nu_1|)(1+|\nu_2|)(1+|\nu_3|)]^R \leq h_{T,R}(\nu) \leq c_R^{(2)} [(1+|\nu_1|)(1+|\nu_2|)(1+|\nu_3|)]^R, \end{equation} for some $c_R^{(2)} \geq c_R^{(1)} > 0$.
If one of the $|\alpha_i|$ is smaller than $R$, then

\begin{equation} h_{T,R}(\nu) \gg_R \ \frac{[(1+|\nu_1|)(1+|\nu_2|)(1+|\nu_3|)]^R}{T^2}. \end{equation}
If $|\alpha_1|, |\alpha_2|, |\alpha_3| > 2R^{3/2} $, then

\begin{equation} h_{T,R}(\nu) \ll \frac{1}{T^R}. \end{equation}




\begin{theorem}

\label{mainthm}

For $j=0,1,2,\dots,$ let $\phi_j$ be a set of orthogonal $GL(3)$ Hecke-Maass forms with spectral parameters $\nu^{(j)} = \left(\nu^{(j)}_1, \nu^{(j)}_2\right)$. Here $\phi_0$ is a constant function. Let $h_{T,R}$ be given as in (\ref{defh}). We have that for fixed $R > 50$ and some $c_R > 0$,

\begin{equation} \sum \limits_{j \geq 1} \frac{ \left|h_{T,R}(\nu^{(j)})\right|}{\mathcal{L}_j} = c_R \frac{T^{4+3R}}{\sqrt{\log T}} + O(T^{3+3R}), \ \ \ \ \ \ \ (T \rightarrow \infty). \end{equation}
Moreover, for fixed $\epsilon > 0$ and positive integers $m_1, m_2, n_1, n_2$, we have

\begin{equation}
\sum \limits_j A_j(m_1, m_2) \overline{A_j(n_1, n_2)} \frac{ \left|h_{T,R}(\nu^{(j)})\right|}{\mathcal{L}_j} =
\begin{cases}
\sum \limits_{j} \frac{ \left|h_{T,R}(\nu^{(j)})\right|}{\mathcal{L}_j} + \mathcal{O}_{R, \epsilon} \bigg((m_1 m_2 n_1 n_2)^6 \, T^{3R+3+\epsilon} \bigg), \text{ if } \substack{m_1 = n_1 \\ m_2 = n_2} \\
\mathcal{O}_{R, \epsilon} \bigg( (m_1 m_2 n_1 n_2)^6 \, T^{3R+3+\epsilon} \bigg), \text{ otherwise.}
\end{cases}
\end{equation}

\end{theorem}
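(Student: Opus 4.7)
The plan is to apply the $GL(3)$ Kuznetsov trace formula of Goldfeld-Kontorovich and Blomer, with Fourier indices $(m_1,m_2)$ and $(n_1,n_2)$, using as test function on the geometric side the Bessel-inverse of $h_{T,R}$; the explicit ratio of gamma factors in (\ref{defh}) is engineered precisely so that this pre-image is analytically tractable. The resulting identity takes the schematic form
\[ \mathcal{C}(m,n) + \mathcal{E}_{\min}(m,n) + \mathcal{E}_{\max}(m,n) \;=\; \mathcal{D}(m,n) + \sum_{w\neq 1} \mathcal{K}_w(m,n), \]
in which $\mathcal{C}$ is the cuspidal sum we seek, $\mathcal{E}_{\min}$ and $\mathcal{E}_{\max}$ are the Eisenstein contributions from the minimal and maximal parabolic subgroups, $\mathcal{D}$ is the diagonal term proportional to $\delta_{m=n}$ times a Plancherel-type integral of $h_{T,R}$, and the $\mathcal{K}_w$ are $GL(3)$ Kloosterman sums for the non-trivial Weyl elements $w \in \{w_4, w_5\}$, weighted by Bessel transforms of the test function.

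For the first assertion, specialize to $m=n=(1,1)$. The walls $\alpha_i = 0$ are precisely the self-dual locus for the $GL(3)$ Langlands parameters, and the estimates (1.2)--(1.4) show that $h_{T,R}$ concentrates on the tempered spectrum in tubular neighbourhoods of width $\sim R/\sqrt{\log T}$ around these walls, attaining its maximal size $\sim T^{3R}$ inside the neighbourhoods and dropping by a factor $T^{-2}$ already at transverse distance $R$. Heuristically, combining the Weyl density of cusp forms (of order $T^3$ per unit $\nu$-area) with tube volume $\sim T \cdot R/\sqrt{\log T}$ and $h_{T,R} \sim T^{3R}$ suggests a count of size $T^{4+3R}/\sqrt{\log T}$; the Kuznetsov formula makes this rigorous through $\mathcal{D}(1,1)$, once the Eisenstein pieces are controlled. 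It then remains to show that $\mathcal{E}_{\min}(1,1)$ and the Kloosterman terms $\mathcal{K}_w(1,1)$ are $O(T^{3+3R})$, which reduces to combining rapid-decay estimates for the Bessel transforms with standard trivial and Weil-type bounds for $GL(3)$ Kloosterman sums.

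For the second assertion, the identity $\mathcal{D}(m,n) = \delta_{m=n}\,\mathcal{D}(1,1)$ reduces the orthogonality claim to bounding the Eisenstein and Kloosterman contributions uniformly in $m,n$. The Kloosterman pieces $\mathcal{K}_w(m,n)$ for $w \in \{w_4, w_5\}$ are bounded by opening up the Kloosterman sums and inserting the bounds on the Bessel transforms supplied by (1.2)--(1.4); together with trivial bounds for the Kloosterman sums themselves, this yields $O_{R,\epsilon}((m_1 m_2 n_1 n_2)^6 T^{3R+3+\epsilon})$, the polynomial dependence on $(m_1 m_2 n_1 n_2)$ reflecting the moduli ranges enforced by the Bessel kernels together with the dependence of the Bessel transforms on the indices. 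The Eisenstein contributions $\mathcal{E}_\bullet(m,n)$ are handled analogously, with Eisenstein Fourier coefficients replacing cuspidal ones.

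The main obstacle is the extraction of the main term from $\mathcal{D}(1,1) - \mathcal{E}_{\max}(1,1)$. The maximal parabolic Eisenstein series is built from $GL(2)$ cusp forms twisted by a continuous parameter, yielding a one-parameter family of Langlands parameters that sweeps through each wall $\alpha_i = 0$; consequently $\mathcal{E}_{\max}$ has spectral mass on the same self-dual walls where $h_{T,R}$ is largest and is a priori comparable in size to $\mathcal{D}$. Separating the cuspidal near-self-dual contribution from the Eisenstein near-self-dual contribution -- and tracking how the residue normalization $\mathcal{L}_j$, together with Hoffstein-Lockhart-type lower bounds for the adjoint and symmetric-fourth $L$-values appearing in $\mathcal{L}_j$ for symmetric-square lifts, produces the correct $\sqrt{\log T}$ factor in the leading coefficient $c_R$ -- is the delicate heart of the argument. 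A secondary hurdle is the long-Weyl-element $w_5$ Kloosterman sum, for which one must go beyond trivial bounds in order to achieve the $T^{3R+3+\epsilon}$ saving in the off-diagonal estimate.
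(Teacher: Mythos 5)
Your overall framework is right: apply the $GL(3)$ Kuznetsov formula with the Lebedev--Whittaker inverse of $h_{T,R}$ as test function, extract the main term from the diagonal piece $\Sigma_1 = \langle F_{T,R}, F_{T,R}\rangle$, and bound the Eisenstein and Kloosterman contributions as errors. Your heuristic (Weyl density $T^3$ per unit $\nu$-area times tube volume $T\cdot R/\sqrt{\log T}$ times $h_{T,R}\sim T^{3R}$) correctly predicts the size of the main term. However, you misidentify where the actual difficulties lie, and two of the things you flag as obstacles turn out not to be obstacles in the paper's argument.

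First, your claim that $\mathcal{E}_{\max}$ ``has spectral mass on the same self-dual walls where $h_{T,R}$ is largest and is a priori comparable in size to $\mathcal{D}$'' is not correct, and the delicate near-wall separation you anticipate is not needed. The maximal Eisenstein spectrum is parametrized by $(r_j,\nu)$ where $r_j$ runs over $GL(2)$ spectral parameters (density $\sim r_j$, so $\sim T^2$ forms with $r_j<T$) and $\nu$ is one continuous parameter. The tube of width $\sim 1/\sqrt{\log T}$ around a wall $\alpha_i=0$ therefore catches a contribution of order $T^2\cdot T^{3R}/\sqrt{\log T}$, which is a factor $T^2$ below the main term $T^{3R+4}/\sqrt{\log T}$. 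The paper does not even need this refinement: it bounds the exponential factors of $F^{\#}_{T,R}$ trivially by $O(1)$ on the Eisenstein spectrum, applies Hoffstein--Lockhart/Hoffstein--Ramakrishnan lower bounds to the $L$-values in the denominator, and integrates, getting $|\mathcal{E}_{\max}| \ll_{R,\epsilon} (m_1m_2n_1n_2)^{1/2+\epsilon}T^{3R+3+\epsilon}$ directly. Second, the $\sqrt{\log T}$ in the leading coefficient $c_R$ has nothing to do with the residues $\mathcal{L}_j$ or with $L$-values of symmetric-square lifts; those appear only as weights in the spectral sum and are carried through untouched. The $\sqrt{\log T}$ arises purely on the geometric side: after Parseval, $\Sigma_1 = \langle F^{\#}_{T,R}, F^{\#}_{T,R}\rangle$ and the factor $\left(\sum_j T^{-\beta_j^2/R^2}\right)^2$ (where $\alpha_j = i\beta_j$) supplies a one-dimensional Gaussian of variance $R^2/\log T$ transverse to the wall; the explicit evaluation $\int_0^\infty e^{-2(\log T)\beta_1^2/R^2}\,d\beta_1 \sim R/\sqrt{\log T}$ produces the logarithm. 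Third, no non-trivial input on the long Weyl element is needed: the paper uses only the Weil-type bound $S(m_1,m_2,n_1,n_2;D_1,D_2)\ll_\epsilon (D_1D_2)^{1+\epsilon}$ together with decay of the Kloosterman integral $\mathcal{J}_{\delta_1,\delta_2}(A_1,A_2)\ll (A_1A_2)^{10-\epsilon}T^{3R-1+\epsilon}$, yielding $|\Sigma_3|\ll (m_1m_2n_1n_2)^5\,T^{3R-1+\epsilon}$, well below the stated error. The genuine technical work is instead in establishing the inverse-transform bound $|F_{T,R}(y)|\ll y_1y_2\,T^{3R/2+11/2+C_1/2+C_2/2+\epsilon}(y_1/T)^{C_1}(y_2/T)^{C_2}$ by repeated contour shifts in Stade's Mellin--Barnes representation of the $GL(3)$ Whittaker function, and in the careful multi-step evaluation of the Gaussian double integral arising from $\Sigma_1$; your proposal does not engage with either.
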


\begin{remark}
Note that an analogous result in \cite{goldfeld} for all $GL(3)$ Hecke-Maass forms with spectral parameters $|\nu_1|, |\nu_2|,|\nu_3| \ll T$ yields a main term of size $T^{5+3R}$. This implies that the family of $GL(3)$ Maass forms picked out by our test function $h_{T,R}$ is indeed significantly thinner (by a factor of $T\sqrt{\log T}$) than the family of all $GL(3)$ Hecke-Maass forms.
\end{remark}

\begin{remark}
The test functions $h_{T,R}$ appearing in this theorem are a product of three terms chosen with the following objectives: the first exponential term contributes with polynomial decay when all $|\alpha_1|, |\alpha_2|, |\alpha_3| \gg 0$ and exponential decay when all $|\alpha_1|, |\alpha_2|, |\alpha_3| \gg T^{\epsilon}$; the second exponential term contributes with exponential decay when one of $|\nu_i| > T^{1+\epsilon}$; the product of Gamma factors is already partially present in the Kuznetsov trace formula and its particular form is such that it has polynomial growth in $\nu$. We note that the methods presented here have also been carried out for a different family of test functions in \cite{goldfeld}.
It would be interesting to generalize \ref{mainthm} for a broader class of test functions.
\end{remark}

\begin{remark}
Blomer \cite{blomer} shows that the residues $\mathcal{L}_j$ are bounded by 

\begin{equation} 
\left((1+|\nu_1^{(j)}|)(1+|\nu_2^{(j)}|)(1+|\nu_3^{(j)}|)\right)^{-1} \ll \mathcal{L}_j \ll_{\epsilon} 
\left((1+|\nu_1^{(j)}|)(1+|\nu_2^{(j)}|)(1+|\nu_3^{(j)}|)\right)^{\epsilon} 
\end{equation}
and, conjecturally, the lower bound is expected to be 

\begin{equation} \label{residue}
\left(1+|\nu_1^{(j)}|)(1+|\nu_2^{(j)}|)(1+|\nu_3^{(j)}|)\right)^{-\epsilon} \ll_{\epsilon} \mathcal{L}_j.
\end{equation}

\end{remark}


\newpage

\section{$GL(3)$ Kuznetsov trace formula}

Following (\cite{blomer}, Proposition 4) and \cite{goldfeld}, the $GL(3)$ Kuznetsov trace formula takes the form

\begin{equation} \mathcal{C} + \mathcal{E}_{min} + \mathcal{E}_{max} = \Sigma_1 + \Sigma_{2a} + \Sigma_{2b} + \Sigma_3,\end{equation}
where all the quantities are explained below.

\begin{definition}
Let $\nu \in \mathbb{C}^2$ and $z \in \mathfrak{h}$ with Iwasawa coordinates

\[ z = x y = \left( \begin{array}{ccc}
1 & x_2 & x_3 \\
0 & 1 & x_1 \\
0 & 0 & 1 \end{array} \right) \left( \begin{array}{ccc}
y_1 y_2 & 0 & 0 \\
0 & y_1 & 0 \\
0 & 0 & 1 \end{array} \right).\]

Let
\[ I_{\nu}(z) = y_1^{\nu_1+2\nu_2} y_2^{2\nu_1 + \nu_2}, \ (z = xy \in \mathfrak{h}). \]
Define the Whittaker function

\begin{multline} W_{\nu}(z) = \pi^{-\nu_1-\nu_2} \Gamma \left( \frac{1+3\nu_1}{2} \right) \Gamma \left( \frac{1+3\nu_2}{2} \right) \Gamma \left( \frac{1+3\nu_1+3\nu_2}{2} \right) \\
\times \iiint \limits_{\mathbb{R}^3} I_{\nu} \left( \left(  \begin{array}{ccc}
 &  & 1 \\
 & 1 &  \\
1 &  &  \end{array} \right)
\left( \begin{array}{ccc}
1 & u_2 & u_3 \\
 & 1 & u_1 \\
 &  & 1 \end{array}
\right) \right) e(-u_1-u_2)  du_1 du_2 du_3,
\end{multline}
where $e(x) = e^{2\pi i x}$.

\end{definition}
Let $F: \mathbb{R}_+^2 \rightarrow \mathbb{C}$ be a bounded function with the following decay

\begin{equation} \label{decay} |F(y_1, y_2) | \ll (y_1 y_2)^{2+\epsilon}, \end{equation}
as $y_1 \rightarrow 0$ or $y_2 \rightarrow 0$.
For such a function define its Lebedev-Kontorovich transform $F^{\#} : D \times D \rightarrow \mathbb{C}$ as

\begin{equation}
F^{\#}(\nu) = \iint \limits_{\mathbb{R}^2_+} F(y) W_{\nu}(y) \frac{dy_1 dy_2}{y_1 y_2},
\end{equation}
where $W_{\nu}(y)$ is the Whittaker function and $D \subset \mathbb{C}$ is of the form $(-\delta,\delta)\times i \mathbb{R}$ for some $\delta > 0$.

For $m,n \in \mathbb{Z}$, $m,n>0$, let $A_{\phi}(m,n)$ be the Fourier-Whittaker coefficient of a Maass form $\phi$ as in \cite{goldfeld}. We normalize the $GL(3)$ Hecke-Maass forms by choosing the first Fourier-Whittaker coefficient to be $A_{\phi}(1,1)=1$. Fix positive integers $m_1, m_2, n_1, n_2$.
The cuspidal contribution $\mathcal{C}$ is given by

\begin{equation}
\mathcal{C} = \sum \limits_j A_j(m_1,m_2) \overline{A_j(n_1,n_2)} \frac{\left|F^{\#}\left(\nu^{(j)}_1, \nu^{(j)}_2\right)\right|^2 }{6 \mathcal{L}_j \prod \limits_{k=1}^3 \Gamma\left(\frac{1+3\nu_k^{(j)}}{2}\right)\Gamma\left(\frac{1-3\nu_k^{(j)}}{2}\right)},
\end{equation}
with $j$ ranging over cuspidal Hecke-Maass forms on $SL(3,\mathbb{Z})$.
The minimal Eisenstein series is

\begin{equation}
\mathcal{E}_{min} = \frac{1}{(4 \pi i)^2} \int \limits_{-i \infty}^{i \infty} \int \limits_{-i \infty}^{i \infty} A_{\nu}(m_1,m_2) \overline{A_{\nu}(n_1,n_2)}\frac{\left|F^{\#}\left(\nu_1, \nu_2\right)\right|^2 }{\prod \limits_{k=1}^3 \left|\zeta \left(1+3\nu_k\right)\Gamma\left(\frac{1+3\nu_k}{2}\right)\right|^2} d\nu_1 d\nu_2
\end{equation}
where the Fourier coefficients satisfy $|A_{\nu}(n_1,n_2)| \ll_{\epsilon} (n_1 n_2)^{\epsilon}$.
The maximal Eisenstein series is

\begin{equation}
\mathcal{E}_{max} = \frac{c}{2 \pi i} \sum \limits_j \int \limits_{-i \infty}^{i \infty} \frac{B_{\nu,j}(m_1,m_2) \overline{B_{\nu,j}(n_1,n_2)} \left| F^{\#} \left(\nu - \frac{i r_j}{3}, \frac{2 i r_j}{3} \right) \right|^2}{L(1, \text{Ad} \ u_j) \left| L(1+3\nu, u_j)\right|^2 \left|\Gamma\left(\frac{1+3\nu-i r_j}{2}\right) \Gamma\left(\frac{1+2i r_j}{2}\right) \Gamma\left(\frac{1+3\nu+i r_j}{2}\right)\right|^2} \ d\nu,
\end{equation}
where $c$ is an absolute constant and $\left\{u_j \right\}$ is an orthogonal basis of Hecke-Maass forms on $SL(2,\mathbb{Z})$ with eigenvalues $1/4 + r_j^2$. The Fourier coefficients satisfy $|B_{\nu,j}(n_1,n_2)| \ll_{\epsilon} (n_1 n_2)^{1/2+\epsilon}$.
Set  
 \[\textbf{1}_{A | B} = \begin{cases} 1, & \mbox{if } A | B \\ 0, & \mbox{otherwise.} \end{cases}\]
 On the geometric side we have

\begin{equation}
\Sigma_1 = \textbf{1}_{\left\{ \substack{m_1 = n_1 \\ m_2 = n_2} \right\}}\iint \limits_{\mathbb{R}_2^+} \left|F(y_1,y_2)\right|^2 \frac{dy_1 dy_2}{(y_1 y_2)^3} = \textbf{1}_{\left\{ \substack{m_1 = n_1 \\ m_2 = n_2} \right\}} \left\langle F, F\right\rangle,
\end{equation}
\begin{equation}
\Sigma_{2a} = \sum \limits_{\delta = \pm 1} \sum \limits_{\substack{D_1 | D_2 \\ m_2 D_1^2 = n_1 D_2}} \frac{\tilde{S}(\delta m_1, n_1, n_2, D_1, D_2)}{D_1 D_2} \tilde{\mathcal{J}}_{\delta} \left(\sqrt{\frac{m_1 n_1 n_2}{D_1 D_2}}\right),
\end{equation}
\begin{equation}
\Sigma_{2b} = \sum \limits_{\delta = \pm 1} \sum \limits_{\substack{D_2 | D_1 \\ m_1 D_2^2 = n_2 D_1}} \frac{\tilde{S}(\delta m_2, n_2, n_1, D_2, D_1)}{D_1 D_2} \tilde{\mathcal{J}}_{\delta} \left(\sqrt{\frac{m_2 n_1 n_2}{D_1 D_2}}\right),
\end{equation}
\begin{equation}
\Sigma_{3} = \sum \limits_{\delta_1, \delta_2 = \pm 1} \sum \limits_{D_1, D_2} \frac{S(\delta_1 m_1, \delta_2 m_2, n_1, n_2, D_1, D_2)}{D_1 D_2} \mathcal{J}_{\delta_1, \delta_2} \left(\frac{\sqrt{m_1 n_2 D_1}}{D_2},\frac{\sqrt{m_2 n_1 D_2}}{D_1}\right),
\end{equation}
where

\begin{multline}
\tilde{\mathcal{J}}_{\delta}(A) = A^{-2} \iint \limits_{\mathbb{R}_+^2} \iint \limits_{\mathbb{R}^2} \overline{F(A y_1, y_2)} e(-\delta A x_1 y_1) F\left(y_2 \frac{\sqrt{1+x_1^2+x_2^2}}{1+x_1^2}, \frac{A}{y_1 y_2} \frac{\sqrt{1+x_1^2}}{1+x_1^2+x_2^2}\right) \\
\times e\left(y_2 \frac{x_1 x_2}{1+x_1^2} + \frac{A}{y_1 y_2} \frac{x_2}{1+x_1^2+x_2^2}\right) \frac{dx_1 dx_2 dy_1 dy_2}{y_1 y_2^2},
\end{multline}

\begin{multline}
\mathcal{J}_{\delta_1, \delta_2}(A_1, A_2) = (A_1 A_2)^{-2} \iint \limits_{\mathbb{R}_+^2} \iiint \limits_{\mathbb{R}^3} \overline{F(A_1 y_1, A_2 y_2)} e(-\delta_1 A_1 x_1 y_1 - \delta_2 A_2 x_2 y_2) \\ \times F\left(\frac{A_2}{y_2} \frac{\sqrt{(x_1 x_2 - x_3)^2+x_1^2+1}}{1+x_2^2+x_3^2}, \frac{A_1}{y_1} \frac{\sqrt{1+x_2^2+x_3^2}}{(x_1 x_2 - x_3)^2+x_1^2+1}\right) \\
\times e\left(-\frac{A_2}{y_2} \frac{x_1 x_3 + x_2}{1+x_2^2+x_3^2} - \frac{A_1}{y_1} \frac{x_2(x_1 x_2 - x_3) + x_1}{(x_1 x_2 - x_3)^2+x_1^2+1}\right) \frac{dx_1 dx_2 dx_3 dy_1 dy_2}{y_1 y_2},
\end{multline}

\begin{equation}
\tilde{S}(m_1, n_1, n_2, D_1, D_2) = \textbf{1}_{D_1 | D_2} \underset{\substack{C_1 (\bmod{D_1}),\ C_2(\bmod{D_2}) \\ (C_1, D_1) = 1 = (C_2, D_2/D_1)}}{\sum \sum} e\left(\frac{m_1 C_1 + n_1 \overline{C_1} C_2}{D_1}\right) e\left(\frac{n_2 \overline{C_2}}{D_2/D_1}\right),
\end{equation}
\begin{multline}
\tilde{S}(m_1, m_2, n_1, n_2, D_1, D_2) =\underset{\substack{B_1, C_1 (\bmod{D_1}) \\ B_2, C_2 (\bmod{D_2}) \\ (B_1, C_1, D_1) = 1 = (B_2, C_2, D_2) \\ B_1 B_2 + C_1 D_2 + C_2 D_1 \equiv 0 (\bmod{D_1 D_2})}}{\sum \sum \sum \sum} e\left(\frac{m_1 B_1 + n_1(Y_1 D_2 - Z_1 B_2)}{D_1}\right) \\
\times e\left(\frac{m_2 B_2 + n_2(Y_2 D_1 - Z_2 B_1)}{D_2}\right),
\end{multline}
and $Y_1, Y_2, Z_1, Z_2$ are such that

\begin{equation}
Y_1 B_1 + Z_1 C_1 \equiv 1 \pmod{D_1} \ \text{and} \ Y_2 B_2 + Z_2 C_2 \equiv 1 \pmod{D_2}.
\end{equation}

\newpage

\section{Bounding the inverse transform}

Let

\begin{equation}
F^{\#}_{T,R}(\nu_1, \nu_2) = \left( T^{\alpha_1^2/R^2} + T^{\alpha_2^2/R^2} + T^{\alpha_3^2/R^2} \right) e^{(\alpha_1^2 + \alpha_2^2 + \alpha_3^2)/T^2} \left(\prod \limits_{j=1}^3 \Gamma\left(\frac{2+R+3\nu_j}{4} \right) \Gamma\left(\frac{2+R-3\nu_j}{4} \right) \right).
\end{equation}
As $F^{\#}_{T,R}$ has enough exponential decay on a strip $|\Re(\nu_1)|, |\Re(\nu_2)| < \epsilon$ then by Lebedev-Whittaker inversion as in section 2.2 of \cite{goldfeld},

\begin{equation}
F_{T,R}(y) = \frac{1}{(\pi i)^2} \int \limits_{-i \infty}^{i \infty} \int \limits_{-i \infty}^{i \infty} F^{\#}_{T,R}(\nu) \overline{W_{\nu}(y)} \frac{d\nu_1 d\nu_2}{\prod \limits_{j=1}^3 \Gamma\left(\frac{3\nu_j}{2}\right) \Gamma\left(-\frac{3\nu_j}{2}\right) }.
\end{equation}
We also have the Parseval-type identity

\begin{equation}
\label{parseval}
\left\langle F_{T,R}, F_{T,R} \right\rangle = \iint \limits_{\mathbb{R}_2^+} \left|F_{T,R}(y_1,y_2)\right|^2 \frac{dy_1 dy_2}{(y_1 y_2)^3} = \frac{1}{(\pi i)^2} \int \limits_{-i \infty}^{i \infty}  \int \limits_{-i \infty}^{i \infty} \frac{\left| F_{T,R}^{\#}(\nu) \right|^2 d\nu_1 d\nu_2}{ \prod \limits_{j=1}^3 \Gamma \left( \frac{3\nu_j}{2} \right) \Gamma \left( \frac{-3\nu_j}{2} \right)} = \left\langle F_{T,R}^{\#}, F_{T,R}^{\#}\right\rangle.
\end{equation}
For proofs of these results refer to \cite{transform}.

\begin{proposition} Fix $C_1, C_2 > 0$, $R > 3\max(C_1,C_2)+6$ and $\epsilon > 0$. For any $y_1, y_2 > 0$, $T \gg 1$, we have
\begin{equation}
\label{mainbound}
\left|F_{T,R}(y)\right| \ll_{C_1, C_2, R, \epsilon} \ y_1 y_2 T^{3R/2 + 11/2 + C_1/2 + C_2/2 + \epsilon} \left(\frac{y_1}{T}\right)^{C_1} \left(\frac{y_2}{T}\right)^{C_2}.
\end{equation}
\end{proposition}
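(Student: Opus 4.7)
The plan is to insert a Mellin-Barnes representation for the Whittaker function into the inverse Lebedev-Whittaker formula and then shift contours to extract polynomial decay in $y_1, y_2$. I would start by invoking Stade's formula to write $\overline{W_\nu(y)}$ as a double Mellin-Barnes integral of the form
\[
c\, y_1 y_2 \cdot \frac{1}{(2\pi i)^2}\iint_{(\sigma_1),(\sigma_2)} \frac{\prod_{j=1}^{3} \Gamma\!\left(\frac{s_1 + \alpha_j}{2}\right)\Gamma\!\left(\frac{s_2 - \alpha_j}{2}\right)}{\Gamma\!\left(\frac{s_1 + s_2}{2}\right)} \pi^{-s_1-s_2}\, y_1^{-s_1} y_2^{-s_2}\, ds_1\, ds_2,
\]
with $\sigma_1, \sigma_2$ initially chosen to the right of all poles. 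Substituting into the formula for $F_{T,R}(y)$ and swapping the order of integration, which is justified by the rapid decay of $F^{\#}_{T,R}$ along the imaginary $\nu$-axis, produces a quadruple contour integral.

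Next I would shift the $s_j$-contours to $\Re s_j = -C_j$, immediately extracting the desired prefactor $y_1^{C_1+1} y_2^{C_2+1}$ from the combination $y_1 y_2 \cdot y_j^{-s_j}$. To keep the shifted $s$-contours off the poles $s_j = \pm \alpha_k - 2m$ of the Mellin kernel, one simultaneously shifts $\Re \nu_j$ slightly to the right; the hypothesis $R > 3\max(C_1, C_2) + 6$ guarantees that this small $\nu$-shift stays to the right of the poles of $\Gamma(\tfrac{2+R+3\nu_j}{4})$ located at $\nu_j = -(2+R+4k)/3$, so no pole of $F^{\#}_{T,R}$ is crossed.

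What remains is to bound the shifted quadruple integral by Stirling's formula. On the shifted contours, $|F^{\#}_{T,R}(\nu)|$ is $O(T^{3R/2+\epsilon})$, with the $T^{\alpha_i^2/R^2}$ factors enforcing Gaussian damping in each $\alpha_i$ at scale $R\sqrt{\log T}$ and the $e^{(\alpha_1^2+\alpha_2^2+\alpha_3^2)/T^2}$ factor giving exponential damping beyond $|\nu_j| \gg T^{1+\epsilon}$. The reciprocal gamma factors $1/\prod_j \Gamma(3\nu_j/2)\Gamma(-3\nu_j/2)$ carry exponentials $e^{+3\pi |\Im \nu_j|/2}$ that cancel cleanly against the exponential decay in the Stade kernel along $\Re s_j = -C_j$. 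After this cancellation, the $s$-integral is bounded by $T^{(C_1+C_2)/2 + O(1)}$ by Stirling, and the residual $\nu$-integral contributes the remaining polynomial factors in $T$ coming from the Stirling polynomial exponents and the effective $\nu$-volume.

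The main obstacle will be the careful bookkeeping of these exponential cancellations across several Stirling approximations on different contours, and the verification that the residues picked up by the shifted $s$-contours at the Mellin-kernel poles contribute at most the same order as the shifted integral itself. Once this is in place, assembling the polynomial powers of $T$ produces the claimed exponent $3R/2 + 11/2 + (C_1+C_2)/2 + \epsilon$.
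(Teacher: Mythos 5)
Your overall approach matches the paper's exactly: insert Stade's Mellin--Barnes representation for $W_\nu$ into the Lebedev--Whittaker inversion of $F^{\#}_{T,R}$, shift the $s$-contours leftward to extract the powers $y_1^{1+C_1}y_2^{1+C_2}$, and estimate everything with Stirling. But there is a genuine gap in how you treat the poles of the Stade kernel, and it is not a side issue: it is where most of the work in the actual proof lives.

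You claim that ``to keep the shifted $s$-contours off the poles $s_j = \pm\alpha_k - 2m$ one simultaneously shifts $\Re\nu_j$ slightly to the right.'' This cannot work. A small shift of $\nu$ off the imaginary axis moves $\Re\alpha_l$ only by $O(\epsilon)$, so the poles of $\prod_j\Gamma\bigl(\tfrac{s_1-\alpha_j}{2}\bigr)$ still sit near $\Re s_1 = 0, -2, -4, \dots$; moving $s_1$ from $(C_1)$ to $(-C_1)$ necessarily crosses the ones with $0\le 2m < C_1$, and likewise for $s_2$. (A large shift of $\Re\nu_j$ would not help either: $\alpha_1+\alpha_2+\alpha_3=0$ forces at least one $\Re\alpha_l \le 0$, so some poles always lie in the strip.) The small $\nu$-shift at most ensures the final contour $\Re s_j=-C_j$ does not pass \emph{through} a pole — the same effect the paper gets for free by assuming $C_j$ is not an even integer — but you still pick up the residues $\mathcal{R}_{m,l}$ at $s_1=\alpha_l-2m$ and then $\mathcal{M}_{m',l'}$, $\mathcal{R}_{m,l,m',l'}$ at $s_2=-\alpha_{l'}-2m'$.

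Bounding those residue terms is not the ``verification'' you defer to at the end; it is the bulk of the argument. Each residue leaves a factor like $y_1^{1-\alpha_l+2m}$ or $y_2^{1+\alpha_{l'}+2m'}$ whose exponent depends on $\alpha$, so on the line $\Re\nu_j=0$ it has the wrong real part ($1+2m$ rather than $1+C_1$). The paper fixes this by shifting the $\nu$-integrals to contours $(B_1),(B_2)$ chosen so that, e.g., $B'_{l'} = C_2-2m'$, which recovers exactly the power $y_2^{1+C_2}$; it then reruns the Stirling analysis on the shifted $\nu$-contour, including a triangle-inequality argument to show the new exponential factor is still $\le 0$ and a check that the first exponential factor of $F^{\#}_{T,R}$ stays bounded by $T$ when $|B'_j|<R$ (this is where the hypothesis $R > 3\max(C_1,C_2)+6$ is actually used). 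There is also a subtlety you do not mention: when two of the $\alpha_j$ differ by an even integer the Stade kernel has a double pole, and the paper observes that the integrand vanishes identically there (because some $1/\Gamma(\pm3\nu_i/2)$ has a zero), so no higher-order residues arise. Without all of this, the estimate $T^{3R/2+11/2+(C_1+C_2)/2+\epsilon}$ is unjustified. Your high-level plan is right, but as written the proposal asserts the poles can be dodged, which is false, and leaves the real content of the proof — the residue bounds and the $\nu$-contour gymnastics that make them work — unaddressed.
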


\begin{proof}
We start by writing out the representation of $W_{\nu}$ as an inverse Mellin transform \cite{stade},

\[ W_{\nu}(y) = \frac{y_1 y_2 \pi^{3/2}}{(2\pi i)^2} \int \limits_{(C_2)} \int \limits_{(C_1)} \frac{\prod \limits_{j=1}^3 \Gamma\left(\frac{s_1 + \alpha_j}{2}\right) \Gamma\left(\frac{s_2-\alpha_j}{2}\right)}{4 \pi^{s_1+s_2} \Gamma\left(\frac{s_1+s_2}{2}\right)} y_1^{-s_1} y_2^{-s_2} ds_1 ds_2 \]
for $C_1, C_2 > 0$.
Combining this with the Lebedev-Whittaker inverse transform of $F_{T,R}$, we observe that

\[ F_{T,R}(y) = \int \limits_{(0)} \int \limits_{(0)} \int \limits_{(C_2)} \int \limits_{(C_1)}  \frac{F^{\#}_{T,R}(\nu_1, \nu_2)}{\prod \limits_{j=1}^3 \Gamma\left(\frac{3\nu_j}{2}\right) \Gamma\left(-\frac{3\nu_j}{2}\right)} \frac{\prod \limits_{j=1}^3 \Gamma\left(\frac{s_1 - \alpha_j}{2}\right) \Gamma\left(\frac{s_2+\alpha_j}{2}\right)}{16 \pi^{s_1+s_2+5/2} \Gamma\left(\frac{s_1+s_2}{2}\right)} y_1^{1-s_1} y_2^{1-s_2} ds_1 ds_2 d\nu_1 d\nu_2. \]
Assume $2k < C_1 < 2k+2$ for some non-negative interger $k$ and pull $s_1$ from the contour $(C_1)$ to the contour $(-C_1)$. Then

\begin{equation}
F_{T,R}(y) = \mathcal{M} + \sum_{m=0}^k \sum_{l=1}^3 \mathcal{R}_{m,l},
\label{firstpull}
\end{equation}
where $\mathcal{M}$ is the main term and is given by the same expression as $F_{T,R}$ with the contour $(C_1)$ replaced by $(-C_1)$. Here $\mathcal{R}_{m,l}$ are the residues corresponding to the poles of the integrand at $s_1 = \alpha_l - 2m$ for $m = 0, \cdots, k$ and $l=1,2,3$. These residues are given by

\[ \mathcal{R}_{m,l} = C_{m,l} \int \limits_{(0)} \int \limits_{(0)} \int \limits_{(C_2)}  \frac{F^{\#}_{T,R}(\nu_1, \nu_2)}{\prod \limits_{j=1}^3 \Gamma\left(\frac{3\nu_j}{2}\right) \Gamma\left(-\frac{3\nu_j}{2}\right)} \frac{\prod \limits_{j \neq l} \Gamma\left(\frac{\alpha_l -2m- \alpha_j}{2}\right) \prod \limits_{j=1}^3\Gamma\left(\frac{s_2+\alpha_j}{2}\right)}{\pi^{\alpha_l-2m+s_2} \, \Gamma\left(\frac{\alpha_l-2m+s_2}{2}\right)} y_1^{1-\alpha_l+2m} y_2^{1-s_2} ds_2 d\nu_1 d\nu_2 \]
for some constant $C_{m,l}$. Note that there will be no contribution of residues of higher order poles, which could possibly show up when two of the $\alpha_i$ differ by an even integer. In this situation, at least one of $\pm 3\nu_1$, $\pm 3\nu_2$, $\pm 3\nu_3$ is equal to a non-positive even integer, making the integrand identically zero in that region.

Now assume $2k' < C_2 < 2k'+2$ for some non-negative interger $k'$. For each term in ($\ref{firstpull}$) shift variable $s_2$ from the contour $(C_2)$ to $(-C_2)$ to get,

\begin{equation}
F_{T,R}(y) = \mathcal{\tilde{M}} + \sum_{m'=0}^k \sum_{l'=1}^3 \mathcal{M}_{m',l'} + \sum_{m, m'=0}^{k, k'} \sum_{l=1}^3 \sum_{l' \neq l} \mathcal{R}_{m,l,m',l'}
\label{secondpull}
\end{equation}
where $\mathcal{\tilde{M}}$ is the main term, given by the same expression as $\mathcal{M}$ with the contour $(C_2)$ replaced by $(-C_2)$ and $\mathcal{M}_{m',l'}$ are the residues corresponding to the poles of the integrand of $\mathcal{M}$ at $s_2 = -\alpha_{l'} - 2m'$ for $m' = 0, \cdots, k'$. The terms $\mathcal{R}_{m,l,m',l'}$ are the residues corresponding to the poles of the integrand of $\mathcal{R}_{m,l}$ at $s_2 = -\alpha_{l'} - 2m'$ for $m' = 0, \cdots, k'$ and $l'\neq l$.
The residues $\mathcal{M}_{m',l'}$ are given by

\[ \mathcal{M}_{m',l'} = C'_{m',l'} \int \limits_{(0)} \int \limits_{(0)} \int \limits_{(-C_1)}  \frac{F^{\#}_{T,R}(\nu_1, \nu_2)}{\prod \limits_{j=1}^3 \Gamma\left(\frac{3\nu_j}{2}\right) \Gamma\left(-\frac{3\nu_j}{2}\right)} \frac{\prod \limits_{j=1}^3 \Gamma\left(\frac{s_1 - \alpha_j}{2}\right) \prod \limits_{j \neq l'}  \Gamma\left(\frac{-\alpha_{l'}-2m'+\alpha_j}{2}\right)}{\pi^{s_1-\alpha_{l'}-2m'} \, \Gamma\left(\frac{s_1-\alpha_{l'}-2m'}{2}\right)} y_1^{1-s_1} y_2^{1+\alpha_{l'}+2m'} ds_1 d\nu_1 d\nu_2 \]
and the residues $\mathcal{R}_{m,l,m',l'}$ are given by

\[ \mathcal{R}_{m,l,m',l'} = C_{m,l,m',l'} \int \limits_{(0)} \int \limits_{(0)} \frac{F^{\#}_{T,R}(\nu_1, \nu_2)}{\prod \limits_{j=1}^3 \Gamma\left(\frac{3\nu_j}{2}\right) \Gamma\left(-\frac{3\nu_j}{2}\right)} \frac{\prod \limits_{j \neq l} \Gamma\left(\frac{\alpha_l -2m- \alpha_j}{2}\right) \prod \limits_{j \neq l'} \Gamma\left(\frac{-\alpha_{l'}-2m'+\alpha_j}{2}\right)}{4 \pi^{\alpha_l-2m-\alpha_{l'}-2m'} \Gamma\left(\frac{\alpha_l-2m-\alpha_{l'}-2m'}{2}\right)} y_1^{1-\alpha_l+2m} y_2^{1+\alpha_{l'}+2m'} d\nu_1 d\nu_2. \]
We will now bound each term in (\ref{secondpull}).
Let $\nu_j = i t_j$ and $s_j = C_j + i u_j$. Note that, for $\Re(\nu_j)=0$, the first exponential term in the definition of $F^{\#}_{T,R}$ is bounded and the second one has exponential decay for $|t_j| > T^{1+\delta}$. Using Stirling's formula to estimate the Gamma factors we get

\[ |\mathcal{\tilde{M}}| \ll y_1^{1+C_1} y_2^{1+C_2} \iint \limits_{|t_1|, |t_2| < T^{1+\delta}} \ \iint \limits_{\mathbb{R}^2} \mathcal{P} \cdot \exp\left( \mathcal{E} \right) du_1 du_2 dt_1 dt_2,\]
where the exponential factor is given by

\[ \frac{4 \mathcal{E}}{\pi} = 3\sum_{j=1}^3 |t_j| + |u_1 + u_2| - \sum_{j=1}^3 |i u_1 - \alpha_j| - \sum_{j=1}^3 |i u_2 + \alpha_j|, \]
and the polynomial term is given by

\begin{multline}\mathcal{P} = \left( \prod_{j=1}^3 (1+|t_j|) \right)^{(R+2)/2} \left( 1+|u_1+u_2| \right)^{(1+C_1+C_2)/2}
\\ \times \left( \prod_{j=1}^3 (1+|i u_1 - \alpha_j|) \right)^{(-C_1- 1)/2} \left( \prod_{j=1}^3 (1+|i u_2 + \alpha_j|) \right)^{(-C_2- 1)/2}.
\end{multline}
We now show that the exponential factor is non-positive. As the exponential factor is invariant under cyclic permutations of $(t_1,t_2,t_3)$, we may assume, without loss of generality, that $t_1$ and $t_2$ have the same sign. Then $|\alpha_1| + |\alpha_3| = 3|t_1|+3|t_2|$. As $|u_1+u_2| \leq |i u_1 - \alpha_2| + |i u_2 + \alpha_2|$, we get

\[ \frac{4 \mathcal{E}}{\pi} \leq 3\sum_{j=1}^3 |t_j| - |i u_1 - \alpha_1| - |i u_1 - \alpha_3| - |i u_2 + \alpha_1| - |i u_2 + \alpha_3| \]

\[ \leq 6|t_1| + 6|t_2| - 2(|\alpha_1| + |\alpha_3|) = 0.\]

For either $|u_1| > 5 T^{1+\delta}$ or $|u_2| > 5 T^{1+\delta}$, the exponential factor is bounded above by $-T^{1+\delta}$ giving exponential decay to the integral. Integrating first over $u_1$, $u_2$ we get

\[ |\mathcal{M}| \ll y_1^{1+C_1} y_2^{1+C_2} \iiiint \limits_{\substack{|t_1|, |t_2| < T^{1+\delta} \\ |u_1|, |u_2| < 5T^{1+\delta} }} \mathcal{P} du_1 du_2 dt_1 dt_2 \ll y_1^{1+C_1} y_2^{1+C_2} \, T^{(3R+11-C_1-C_2)/2 + \epsilon}\]
by choosing $\delta$ appropriately.
To bound the residues $\mathcal{M}_{m',l'}$ we start by shifting variables $\nu_1$ and $\nu_2$ to contours $(B_1)$ and $(B_2)$, respectively, where $|B_1|, |B_2| < R/3$ and $B'_{j} < C_1$ for $j \neq l$, defining $B'_1 = 2B_1 + B_2$, $B'_2 = B_2 - B_1$, $B'_3 = -B_1 - 2B_2$, in a manner similar to the $\nu_j$. Note that the first exponential term is now bounded by $3T^{(\max{B'_1, B'_2, B'_3})^2/R^2} \leq T$. It follows that

\[ |\mathcal{M}_{m',l'}| \ll y_1^{1+C_1} y_2^{1+B'_{l'} + 2m'} T \iint \limits_{|t_1|, |t_2| < T^{1+\delta}} \ \int \limits_{\mathbb{R}} \mathcal{P} \cdot \exp\left( \mathcal{E} \right) du_1 dt_1 dt_2 \]
where the exponential factor is given by

\[ \frac{4 \mathcal{E}}{\pi} = 3\sum_{j=1}^3 |t_j| + |u_1 - \Im(\alpha_{l'})| - \sum_{j=1}^3 |u_1 - \Im(\alpha_j)| - \sum_{j \neq l'} |\Im(\alpha_j-\alpha_{l'})|, \]
and the polynomial term is given by

\begin{multline}\mathcal{P} = \left( \prod_{j=1}^3 (1+|t_j|) \right)^{(R+2)/2} \left( 1+|u_1-\Im(\alpha_{l'})| \right)^{(1+C_1+B'_{l'}+2m')/2} 
\\ \times \left( \prod_{j=1}^3 (1+|u_1 - \Im(\alpha_j)|) \right)^{(-C_1- 1)/2} \prod_{j \neq l'} \left( (1+|\Im(-\alpha_{l'} + \alpha_j)|)^{(B'_j - B'_{l'}-2m'- 1)/2} \right).
\end{multline}
The exponential factor is again non-positive as 

\[ \frac{4 \mathcal{E}}{\pi} \, \leq \, 3\sum_{j=1}^3 |t_j| - \sum_{j \neq l'} |u_1 - \Im(\alpha_j)| - \sum_{j \neq l'} |\Im(\alpha_j-\alpha_{l'})| \, \leq 0 \]
by using the triangle inequality on the second sum to get a difference of $\Im(\alpha_j)$.
We now pick $B'_{l'} = C_2 - 2m'$ and $B'_j < 0$ for $j \neq l'$ to get

\[ |\mathcal{M}_{m',l'}| \ll y_1^{1+C_1} y_2^{1+C_2} T \iiint \limits_{\substack{|t_1|, |t_2| < T^{1+\delta} \\ |u_1| < 10 T^{1+\delta} }}  \mathcal{P} du_1 dt_1 dt_2 \ll y_1^{1+C_1} y_2^{1+C_2} T^{(3R+11-C_1-C_2)/2 + \epsilon}. \]
To bound the residues $\mathcal{R}_{m,l,m',l'}$ we again shift variables $\nu_1$ and $\nu_2$ to contours $(B_1)$ and $(B_2)$, respectively, where $|B_1|, |B_2| < R/3$. To simplify notation, we will assume without loss of generality that $l=1$ and $l'=2$. We obtain

\[ |\mathcal{R}_{m,1,m',2}| \ll y_1^{1-B'_1+2m} y_2^{1+B'_2 + 2m'} T \iint \limits_{|t_1|, |t_2| < T^{1+\delta}} \mathcal{P} \cdot \exp\left( \mathcal{E} \right) dt_1 dt_2 \]
where the exponential factor $\mathcal{E}$ is given by

\[ \frac{4 \mathcal{E}}{\pi} = 3\sum_{j=1}^3 |t_j| - \sum \limits_{j \neq 1} |\Im(\alpha_j - \alpha_1) | - \sum \limits_{j \neq 2} |\Im(\alpha_j - \alpha_2) | + |\Im(\alpha_1-\alpha_2)| = 0 \]
and the polynomial term is given by

\[ \mathcal{P} = \left( \prod_{j=1}^3 (1+|t_j|) \right)^{(R+2)/2} (1+|t_1|)^{(3B_1-1)/2} (1+|t_2|)^{(-3B_2-2m'-1)/2} (1+|t_3|)^{(3B_3-2m-1)/2}, \]
where $B_3 = B_1 + B_2$.
Then pick $B'_1 = -C_1 + 2m$ and $B'_2 = C_2 - 2m'$ to get

\[ |\mathcal{R}_{m,1,m',2}| \ll y_1^{1+C_1} y_2^{1+C_2} \, T \iint \limits_{|t_1|, |t_2| < T^{1+\delta}} \mathcal{P} dt_1 dt_2 \ll y_1^{1+C_1} y_2^{1+C_2} \, T^{(3R+9-2C_1 - 2C_2 + 2m + 2m')/2 + \epsilon}  \]

\[ \ll y_1^{1+C_1} y_2^{1+C_2} \, T^{(3R+9-C_1 - C_2)/2 + \epsilon}. \]
Combining all the bounds we get the desired result.

\end{proof}

\begin{remark}
It follows from this proposition that $F_{T,R}$ satisfies the decay condition (\ref{decay}).
\end{remark}

\newpage

\section{Bounds for the Kloosterman terms}

We start by getting bounds for the Kloosterman integrals $\tilde{\mathcal{J}}_{\delta}(A)$ and $\mathcal{J}_{\delta_1, \delta_2}(A_1, A_2)$.
Break the integral $\tilde{\mathcal{J}}_{\delta}(A) = \tilde{\mathcal{J}}_1 + \tilde{\mathcal{J}}_2 + \tilde{\mathcal{J}}_3 + \tilde{\mathcal{J}}_4$ depending on whether $y_1$ and $y_2$ are smaller or greater than $1$.
To estimate $\tilde{\mathcal{J}}_1$, start by taking absolute values inside the integral

\[ |\tilde{\mathcal{J}}_1| \ll A^{-2} \int_0^1 \int_0^1 \iint \limits_{\mathbb{R}^2} \left|F_{T,R}(A y_1, y_2)\right| \left| F_{T,R}\left(y_2 \frac{\sqrt{1+x_1^2+x_2^2}}{1+x_1^2}, \frac{A}{y_1 y_2} \frac{\sqrt{1+x_1^2}}{1+x_1^2+x_2^2}\right) \right|\frac{dx_1 dx_2 dy_1 dy_2}{y_1 y_2^2}. \]
Use (\ref{mainbound}) with $C_1 = C_2 = 6 - \epsilon/2$ to bound the first instance of $F_{T,R}$ and $C_1 = \epsilon$, $C_2 = 6 - \epsilon$ to bound the second instance of $F_{T,R}$ in order to get

\[ |\tilde{\mathcal{J}}_1| \ll A^{12-3\epsilon/2} \, T^{3R+2+3\epsilon/2} \int_0^1 \int_0^1 \iint \limits_{\mathbb{R}^2} \frac{y_1^{-1+\epsilon/2} y_2^{-1+3\epsilon/2} (1+x_1^2)^{3-3\epsilon/2} }{(1+x_1^2+x_2^2)^{6-3\epsilon/2}} dx_1 dx_2 dy_1 dy_2.\]
It is clear that the integral in $y$ converges and the integral in $x$ also converges for small values of $\epsilon$. After a suitable redefinition of $\epsilon$ we get

\[ |\tilde{\mathcal{J}}_1| \ll_{R, \epsilon} A^{12-\epsilon} \, T^{3R+2+\epsilon}. \]
For the remaining three integrals the argument is almost identical. The only necessary changes are to pick $C_1 = 6-3\epsilon/2$ when $y_1 > 1$ and $C_2 = 6 - 5\epsilon/2$ when $y_2 > 1$, while bounding the first instance of $F_{T,R}$. Putting all the bounds together, we obtain

\begin{equation}
\label{kbound1}
|\tilde{\mathcal{J}}_{\delta}(A)| \ll_{R, \epsilon} A^{12-\epsilon} \, T^{3R+2+\epsilon}.
\end{equation}
To bound $\mathcal{J}_{\delta_1, \delta_2}(A_1, A_2)$, we also break it into $\mathcal{J}_{\epsilon_1, \epsilon_2}(A_1, A_2) = \mathcal{J}_1 + \mathcal{J}_2 + \mathcal{J}_3 + \mathcal{J}_4$ depending on whether $y_i$ is smaller or greater than $1$.
We will first bound $\mathcal{J}_1$ by taking absolute values and doing the change of variables that sends $x_3$ to $x_3 + x_1 x_2$. We have

\begin{multline}
|\mathcal{J}_1| \ll (A_1 A_2)^{-2} \int_0^1 \int_0^1 \iiint \limits_{\mathbb{R}^3} \left|F_{T,R}(A_1 y_1, A_2 y_2) \right| \\
\times \left| F_{T,R}\left(\frac{A_2}{y_2} \frac{\sqrt{x_3^2+x_1^2+1}}{1+x_2^2+x_3^2}, \frac{A_1}{y_1} \frac{\sqrt{1+x_2^2+x_3^2}}{x_3^2+x_1^2+1}\right) \right| \frac{dx_1 dx_2 dx_3 dy_1 dy_2}{y_1 y_2}.
\end{multline}
Then apply (\ref{mainbound}) with $C_1 = C_2 = 6-\epsilon$ to bound the first instance of $F_{T,R}$ and $C_1 = C_2 = 6 - 2\epsilon$ to bound the second instance of $F_{T,R}$ in order to get

\[ |\mathcal{J}_1| \ll (A_1 A_2)^{10-3\epsilon} \, T^{3R-1+2\epsilon} \int_0^1 \int_0^1 \iiint \limits_{\mathbb{R}^3} \frac{(y_1 y_2)^{-1+\epsilon} dx_1 dx_2 dx_3 dy_1 dy_2}{\left((1+x_2^2+x_3^2)(1+x_1^2+x_3^2)\right)^{3-\epsilon}}. \]
As the integral in $y$ is convergent and the $x$ integral is also convergent for small values of $\epsilon$ then, after redefining $\epsilon$

\[ |\mathcal{J}_1| \ll_{R, \epsilon} (A_1 A_2)^{10-\epsilon} \, T^{3R-1+\epsilon}. \]
For the remaining three integrals the same argument works by choosing $C_1 = 6-\epsilon/2$ when $y_1 > 1$ and $C_2 = 6-\epsilon/2$ when $y_2>1$, while bounding the second instance of $F_{T,R}$. Combining all four bounds, one obtains

\begin{equation}
\label{kbound2}
|\mathcal{J}_{\delta_1, \delta_2}(A_1, A_2)| \ll_{R, \epsilon}  (A_1 A_2)^{10-\epsilon} \, T^{3R-1+\epsilon}.
\end{equation}
We may now bound the Kloosterman terms $\Sigma_{2a}$, $\Sigma_{2b}$ and $\Sigma_3$. The only necessary bounds for the Kloosterman sums will be

\[ \tilde{S}(m_1, n_1, n_2, D_1, D_2) \ll_{\epsilon} (D_1 D_2)^{1+\epsilon} \]
\[ S(m_1, m_2, n_1, n_2, D_1, D_2) \ll_{\epsilon} (D_1 D_2)^{1+\epsilon}.\]
To bound $\Sigma_{2a}$ we use (\ref{kbound1}) together with the first bound for Kloosterman sums, to obtain

\[ |\Sigma_{2a}| \ll_{R, \epsilon} \sum_{D_2=1}^{\infty} \sum \limits_{\substack{D_1 | D_2 \\ m_2 D_1^2 = n_1 D_2}} \frac{|\tilde{S}(\delta m_1, n_1, n_2, D_1, D_2)|}{D_1 D_2} \left|\tilde{\mathcal{J}}_{\delta} \left(\sqrt{\frac{m_1 n_1 n_2}{D_1 D_2}}\right) \right| \]
\[ \ll T^{3R+2+\epsilon} \sum \limits_{D_1, D_2} \frac{(m_1 n_1 n_2)^6}{(D_1 D_2)^{6-3\epsilon/2}} \ll (m_1 n_1 n_2)^6 \, T^{3R+2+\epsilon}. \]
The bound for $\Sigma_{2b}$ is obtained in the same manner. In order to bound $\Sigma_3$ we use (\ref{kbound2}) together with the second bound for Kloosterman sums, to obtain

\[ |\Sigma_3| \ll_{R, \epsilon} \sum \limits_{D_1, D_2} \frac{|S(m_1, m_2, n_1, n_2, D_1, D_2)|}{D_1 D_2} \left|\mathcal{J}_{\delta_1, \delta_2} \left(\frac{\sqrt{m_1 n_2 D_1}}{D_2},\frac{\sqrt{m_2 n_1 D_2}}{D_1}\right) \right| \]
\[ \ll (m_1 m_2 n_1 n_2)^5 T^{3R-1+\epsilon} \sum \limits_{\delta_1, \delta_2 = \pm 1} \sum \limits_{D_1, D_2} \frac{1}{(D_1 D_2)^{5-3\epsilon/2}} \ll (m_1 m_2 n_1 n_2)^5 \, T^{3R-1+\epsilon}. \]
For future reference, we write down these bounds in the following proposition.

\begin{proposition}
\label{kloostbounds}
Fix $R > 50$, $T \gg 1$ and $\epsilon > 0$. We have the following bounds for the Kloosterman terms:

\[ |\Sigma_{2a}| \ll_{R, \epsilon}  (m_1 n_1 n_2)^6 \, T^{3R+2+\epsilon}; \]

\[ |\Sigma_{2b}| \ll_{R, \epsilon}  (m_2 n_1 n_2)^6 \, T^{3R+2+\epsilon}; \]

\[ |\Sigma_{3}| \ll_{R, \epsilon}  (m_1 m_2 n_1 n_2)^5 \, T^{3R-1+\epsilon}. \]

\end{proposition}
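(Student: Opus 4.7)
The plan is to combine three ingredients: pointwise bounds on the Kloosterman integrals $\tilde{\mathcal{J}}_{\delta}(A)$ and $\mathcal{J}_{\delta_1,\delta_2}(A_1,A_2)$, the standard bounds $|\tilde{S}|,\,|S| \ll_{\epsilon} (D_1 D_2)^{1+\epsilon}$ for the generalized Kloosterman sums, and absolutely convergent sums over the moduli $D_1, D_2$.

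The heart of the argument is the pointwise estimation of the integrals. For $\tilde{\mathcal{J}}_{\delta}(A)$, I would partition the $(y_1,y_2)$-region into the four subregions $y_i \lessgtr 1$, take absolute values inside the integral, and apply the bound \eqref{mainbound} from Proposition~3.1 to each of the two factors of $F_{T,R}$. The freedom to choose the exponents $C_1, C_2$ in \eqref{mainbound} must be used simultaneously to make the $dy_1\,dy_2$ integral converge at the relevant endpoints, make the $dx_1\,dx_2$ integral over $\mathbb{R}^2$ converge against the algebraic weights $\sqrt{1+x_1^2+x_2^2}/(1+x_1^2)$ appearing in the argument of the second $F_{T,R}$, and produce the sharpest possible power of $A$. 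Taking $C_1, C_2$ close to $6$ in each region, with small $\epsilon$-adjustments needed for absolute convergence of the $x$-integral, should yield $|\tilde{\mathcal{J}}_{\delta}(A)| \ll_{R,\epsilon} A^{12-\epsilon}\, T^{3R+2+\epsilon}$. The analogous treatment of $\mathcal{J}_{\delta_1,\delta_2}(A_1,A_2)$, after the change of variables $x_3 \mapsto x_3 + x_1 x_2$ that simplifies the argument of the second $F_{T,R}$, should give $|\mathcal{J}_{\delta_1,\delta_2}(A_1,A_2)| \ll_{R,\epsilon} (A_1 A_2)^{10-\epsilon}\, T^{3R-1+\epsilon}$.

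With these in hand the three claimed bounds follow by direct substitution. For $\Sigma_{2a}$ the argument of $\tilde{\mathcal{J}}_{\delta}$ is $A = \sqrt{m_1 n_1 n_2 /(D_1 D_2)}$, so $A^{12-\epsilon}$ contributes $(m_1 n_1 n_2)^{6-\epsilon/2}(D_1 D_2)^{-6+\epsilon/2}$; combined with $|\tilde{S}| \ll (D_1 D_2)^{1+\epsilon}$ and the $1/(D_1 D_2)$ prefactor, the resulting sum $\sum_{D_1, D_2}(D_1 D_2)^{-5+O(\epsilon)}$ converges and yields the stated bound. The $\Sigma_{2b}$ estimate is symmetric under the involution swapping the two rows of Kloosterman data. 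For $\Sigma_3$, using $A_1 A_2 = \sqrt{m_1 m_2 n_1 n_2}/\sqrt{D_1 D_2}$ produces $(A_1 A_2)^{10-\epsilon} = (m_1 m_2 n_1 n_2)^{5-\epsilon/2}(D_1 D_2)^{-5+\epsilon/2}$, and the same bookkeeping gives an absolutely summable tail over $D_1, D_2$.

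The principal obstacle is the first step: simultaneously satisfying the convergence requirements in $y$ and $x$ while extracting the strongest possible power of $A$. The condition $R > 50$ enters because one needs $C_1, C_2$ of size at least $6$ to beat the inverse powers of $D_1, D_2$ arising from the definitions of $\Sigma_{2a}, \Sigma_{2b}, \Sigma_3$, and this must be compatible with the hypothesis $R > 3\max(C_1,C_2)+6$ of Proposition~3.1 together with the other applications of \eqref{mainbound} elsewhere in the paper. By contrast, the Kloosterman sum estimates used here are merely the trivial/Weil-type bounds, so no arithmetic subtlety is needed on that side.
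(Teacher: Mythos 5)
Your proposal follows the paper's argument essentially step for step: the same quartering of the $(y_1,y_2)$-plane at $y_i = 1$, the same application of Proposition~3.1 with $C_1, C_2$ near $6$ (up to small $\epsilon$-shifts chosen to make the $y$- and $x$-integrals converge), the same substitution $x_3 \mapsto x_3 + x_1 x_2$ in $\mathcal{J}_{\delta_1,\delta_2}$, yielding the paper's pointwise bounds $|\tilde{\mathcal{J}}_\delta(A)| \ll A^{12-\epsilon} T^{3R+2+\epsilon}$ and $|\mathcal{J}_{\delta_1,\delta_2}(A_1,A_2)| \ll (A_1A_2)^{10-\epsilon} T^{3R-1+\epsilon}$, after which the sums over $D_1, D_2$ are handled exactly as you describe. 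The only slight imprecision is that the paper in fact takes one exponent ($C_1$ for the second $F_{T,R}$ factor in the $\tilde{\mathcal{J}}$ case) near $0$ rather than near $6$; this is one of the ``$\epsilon$-adjustments'' you anticipate, so the discrepancy is cosmetic.
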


\newpage

\section{Bounds for the Eisenstein terms}

We start by obtaining a bound for the contribution to the Kuznetsov trace formula of the minimal Eisenstein series $\mathcal{E}_{min}$. We will require the de la Vall\'{e}e Poussin bound for the Riemann zeta function,

\[ |\zeta(1+it)| \gg \frac{1}{\log(2+|t|)}. \]
Using the previous bound and Stirling's formula for the Gamma factors, we get

\[ |\mathcal{E}_{min}| \ll \int \limits_{-i T^{1+\epsilon}}^{i T^{1+\epsilon}} \int \limits_{-i T^{1+\epsilon}}^{i T^{1+\epsilon}} (m_1 m_2 n_1 n_2)^{\epsilon} \, \frac{ \prod \limits_{k=1}^3 \left| \Gamma\left(\frac{2+R+3\nu_k}{4} \right) \Gamma\left(\frac{2+R-3\nu_k}{4} \right) \right|^2 }{\prod \limits_{k=1}^3 \left|\zeta \left(1+3\nu_k\right)\Gamma\left(\frac{1+3\nu_k}{2}\right)\right|^2} |d\nu_1 d\nu_2| \]

\[ \ll  (m_1 m_2 n_1 n_2)^{\epsilon} \int \limits_{-i T^{1+\epsilon}}^{i T^{1+\epsilon}} \int \limits_{-i T^{1+\epsilon}}^{i T^{1+\epsilon}} \prod \limits_{k=1}^3 \left( (1+|\nu_k|)^R \log(2+|\nu_k|)^2 \right) |d\nu_1 d\nu_2| \ll_{R, \epsilon}  (m_1 m_2 n_1 n_2)^{\epsilon} \, T^{3R+2+\epsilon}.\]
To bound the maximal Eisenstein series contribution $\mathcal{E}_{max}$ we require the following lower bounds for $L$-functions

\[ L(1, \text{Ad} \ u_j) \gg_{\epsilon} (1+|r_j|)^{-\epsilon} \]

\[ |L(1+3\nu, u_j)| \gg_{\epsilon} (1+|\nu|+|r_j|)^{-\epsilon} \]
where the eigenvalue of $u_j$ is $1/4+r_j^2$. These lower bounds can be found in \cite{lowerbound1} and \cite{lowerbound2}. It follows that

\[ |\mathcal{E}_{max}| \ll \sum \limits_{r_j < T^{1+\epsilon}}   \int \limits_{-i T^{1+\epsilon}}^{i T^{1+\epsilon}} (m_1 m_2 n_1 n_2)^{1/2+\epsilon} \, \frac{\left|  \Gamma\left(\frac{2+R+3\nu+i r_j}{4}\right) \right|^8 \left| \Gamma\left(\frac{2+R+2 i r_j}{4}\right)\right|^4 (1+|r_j|)^{\epsilon} (1+|\nu|+|r_j|)^{2\epsilon} }{\left|\Gamma\left(\frac{1+3\nu-i r_j}{2}\right) \Gamma\left(\frac{1+2i r_j}{2}\right) \Gamma\left(\frac{1+3\nu+i r_j}{2}\right)\right|^2} |d\nu| \]

\[ \ll (m_1 m_2 n_1 n_2)^{1/2+\epsilon} \sum \limits_{r_j < T^{1+\epsilon}}  \int \limits_{-i T^{1+\epsilon}}^{i T^{1+\epsilon}} (1+|r_j|)^{R+\epsilon} (1+|\nu|+|r_j|)^{2R+2\epsilon} |d\nu| \]
\[ \ll_{R, \epsilon} (m_1 m_2 n_1 n_2)^{1/2+\epsilon} \, T^{3R+3+\epsilon}. \]
For the last inequality, we use Weyl's law for $GL(2)$ Maass forms which tells us that

\[ \left| \big\{ \phi_j : r_j < T \big\} \right| \sim  cT^2 \]
for some constant $c > 0$.
In summary, we obtain the following proposition.

\begin{proposition}
\label{eisensteinbounds}
Fix $R > 50$, $T \gg 1$ and $\epsilon > 0$. We have the following bounds for the Eisenstein terms in the Kuznetsov trace formula:

\[ |\mathcal{E}_{min}| \ll_{R, \epsilon} (m_1 m_2 n_1 n_2)^{\epsilon} \, T^{3R+2+\epsilon}; \]

\[ |\mathcal{E}_{max}| \ll_{R, \epsilon} (m_1 m_2 n_1 n_2)^{1/2+\epsilon} \, T^{3R+3+\epsilon}. \]

\end{proposition}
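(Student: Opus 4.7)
The plan is to bound the two Eisenstein contributions separately, exploiting in each case that the Gaussian factor $\exp\!\left((\alpha_1^2+\alpha_2^2+\alpha_3^2)/T^2\right)$ appearing in $F^{\#}_{T,R}$ provides enough decay to restrict the spectral parameters to a box of size $T^{1+\epsilon}$. Inside that box, Stirling's formula for all Gamma factors, together with known lower bounds for the $L$-values in the denominators, reduces matters to a clean polynomial integration. The polynomial shape we expect is essentially $\prod_k (1+|\nu_k|)^R$, coming from the fact that the four Gamma factors in the numerator of $|F^{\#}_{T,R}|^2$ each have real part $R/4$, while the two in the denominator have real part $1/2$, leaving a net Stirling-growth of $R$ per factor.

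For $\mathcal{E}_{min}$, I would first absorb the Fourier coefficients using $|A_\nu(m_1,m_2)\overline{A_\nu(n_1,n_2)}| \ll (m_1 m_2 n_1 n_2)^\epsilon$ and truncate the double integral to $|\nu_1|,|\nu_2| \le T^{1+\epsilon}$; the tail contributes negligibly because of the Gaussian factor. Inside the truncated region, I would invoke the de la Vallée Poussin bound $|\zeta(1+3\nu_k)| \gg 1/\log(2+|\nu_k|)$ to control the zeta denominators and then apply Stirling's formula to the Gamma ratio, so the integrand is bounded by $\prod_{k=1}^{3}(1+|\nu_k|)^R \log(2+|\nu_k|)^2$. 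The three linear constraints $\nu_1+\nu_2+\nu_3=0$ reduce the effective integration to a two-dimensional region of area $T^{2+\epsilon}$, and using $|\nu_3| \ll \max(|\nu_1|,|\nu_2|) \ll T^{1+\epsilon}$ yields the desired bound $(m_1 m_2 n_1 n_2)^\epsilon T^{3R+2+\epsilon}$.

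For $\mathcal{E}_{max}$, the strategy is analogous, but I would now insert the coefficient bound $|B_{\nu,j}(n_1,n_2)| \ll (n_1 n_2)^{1/2+\epsilon}$, giving the factor $(m_1 m_2 n_1 n_2)^{1/2+\epsilon}$, and replace the zeta lower bound by the $L$-function lower bounds $L(1,\text{Ad}\ u_j) \gg (1+|r_j|)^{-\epsilon}$ and $|L(1+3\nu,u_j)| \gg (1+|\nu|+|r_j|)^{-\epsilon}$ from the literature. The Gaussian in $F^{\#}_{T,R}$ evaluated at $(\nu - ir_j/3,\,2ir_j/3)$ now controls both $|\nu|$ and $|r_j|$, so both are truncated at $T^{1+\epsilon}$. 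Stirling's formula applied to the eight $|\Gamma(\tfrac{2+R+3\nu \pm ir_j}{4})|^2$ factors and the four $|\Gamma(\tfrac{2+R+2ir_j}{4})|^2$ factors, divided by the relevant Gamma factors in the denominator, yields a polynomial bound of order $(1+|r_j|)^R (1+|\nu|+|r_j|)^{2R}$. Executing the spectral sum by Weyl's law for $SL(2,\mathbb{Z})$ Maass forms ($|\{j : r_j < T\}| \sim cT^2$) and integrating over $\nu$ of length $T^{1+\epsilon}$ produces $T^{3R+3+\epsilon}$, the extra factor of $T$ over the minimal case reflecting precisely that the $GL(2)$ Weyl count contributes $T^2$ instead of the one-dimensional integral of length $T$ that appears for $\mathcal{E}_{min}$.

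The principal obstacle is purely bookkeeping: tracking the Gamma-factor ratios through Stirling so that they collapse to clean polynomial factors with the correct total degree $3R$, and then correctly identifying the effective region of integration so that no spurious log-losses appear beyond the harmless $T^\epsilon$. Once these two points are handled, the rest is routine and does not depend on any deep input beyond the classical lower bounds for $\zeta$, $L(1,\text{Ad}\ u_j)$, $L(1+3\nu,u_j)$, and the $GL(2)$ Weyl law.
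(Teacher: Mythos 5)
Your proposal is correct and follows essentially the same route as the paper: truncate the spectral integrals to a box of size $T^{1+\epsilon}$ using the Gaussian factor in $F^{\#}_{T,R}$, insert the coefficient bounds and the classical lower bounds (de la Vall\'{e}e Poussin for $\zeta(1+it)$, Hoffstein--Lockhart for $L(1,\mathrm{Ad}\,u_j)$, Hoffstein--Ramakrishnan for $L(1+3\nu,u_j)$), apply Stirling to collapse the Gamma ratio to polynomial growth of degree $R$ per spectral parameter, and finish with the $GL(2)$ Weyl law for $\mathcal{E}_{max}$. The only minor imprecision is the bookkeeping remark about ``four Gamma factors'' in $|F^{\#}_{T,R}|^2$ --- there are four \emph{per} $\nu_j$ (twelve total), each with real part $(2+R)/4$ --- but your net conclusion of $R$ per factor is exactly right and matches the paper's computation.
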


\newpage

\section{Estimating the main term on the geometric side}
To estimate the main term on the geometric side, $\Sigma_1 = \left\langle F_{T,R}, F_{T,R}\right\rangle$, we use the Parseval-type identity (\ref{parseval}) which says that $\Sigma_1 = \left\langle F_{T,R}^{\#}, F_{T,R}^{\#}\right\rangle$. Hence

\[ \left\langle F_{T,R}^{\#}, F_{T,R}^{\#}\right\rangle = \frac{1}{(\pi i)^2} \int \limits_{-i \infty}^{i \infty}  \int \limits_{-i \infty}^{i \infty} \frac{\left| F_{T,R}^{\#}(\nu) \right|^2 d\nu_1 d\nu_2}{ \prod \limits_{j=1}^3 \Gamma \left( \frac{3\nu_j}{2} \right) \Gamma \left( \frac{-3\nu_j}{2} \right)} \]

\[ = \frac{27^{R+1} \pi}{64^R} \int \limits_{-\infty}^{\infty}  \int \limits_{-\infty}^{\infty} \left( \sum_{j=1}^3 T^{-\beta_j^2/R^2} \right)^2 \exp \left(-2\sum_{j=1}^3 \beta_j^2/T^2 \right) \prod_{j=1}^3 \bigg( |t_j|^{R+1} + O(|t_j|^R + 1) \bigg) dt_1 dt_2 \]
where $\nu_j = i t_j$ and $\alpha_j = i \beta_j$. The second equality follows from Stirling's approximation of the Gamma factors. Making a linear change of variables of integration from $t_1, t_2$ to $\beta_1 = t_3 - t_1 = 2t_1+t_2, \beta_2=t_2-t_1$ and using the symmetry of the integral in the $\beta_i$, one gets

\[ = \frac{9^{R+1} 4\pi}{64^R} \int \limits_{0}^{\infty}  \int \limits_{0}^{\beta_2} \left( \sum_{j=1}^3 T^{-\beta_j^2/R^2} \right)^2 \exp \left(-2\sum_{j=1}^3 \beta_j^2/T^2 \right) \prod_{j\neq k} \bigg( |\beta_j-\beta_k|^{R+1} + O(|\beta_j-\beta_k|^R + 1) \bigg) d\beta_1 d\beta_2 \]
\[ = \frac{9^{R+1} 8\pi}{32^R} \int \limits_{0}^{\infty}  \int \limits_{0}^{\beta_2} \left( \sum_{j=1}^3 T^{-\beta_j^2/R^2} \right)^2 \exp \left(-2\sum_{j=1}^3 \beta_j^2/T^2 \right) \bigg( \beta_2^{3R+3} + O(\beta_2^{3R+2} \beta_1  +\beta_2^{3R+2} + 1) \bigg) d\beta_1 d\beta_2.\]
We now multiply out the integrand to get
\[ M + error = \frac{9^{R+1} 8\pi}{32^R} \int \limits_{0}^{\infty}  \int \limits_{0}^{\beta_2} \exp \left(-2\log T \beta_1^2/R^2-2\sum_{j=1}^3 \beta_j^2/T^2 \right) \beta_2^{3R+3} d\beta_1 d\beta_2 + error\]
where each of the error terms is bounded (up to a constant) by either
\[ I_1 = \int \limits_{0}^{\infty}  \int \limits_{0}^{\infty} \exp \left(-c_1 \beta_1^2-c_2 \beta_2^2 \right) \beta_2^{3R+3} d\beta_1 d\beta_2\]
with $c_1 \gg 1/T^2$ and $c_2 \gg \log T$ or
\[ I_2(l_1, l_2) = \int \limits_{0}^{\infty}  \int \limits_{0}^{\infty} \exp \left(-c_1 \beta_1^2-c_2 \beta_2^2 \right) \beta_2^{l_2} \beta_1^{l_1} d\beta_1 d\beta_2\]
where $(l_1,l_2) = (1, 3R+2), (0,3R+2)$ or $(0,0)$, and $c_1, c_2 \gg 1/T^2$, $\max(c_1,c_2) \gg \log T$.
Upon scaling, we get that $I_1 = O(c_1^{-1/2} c_2^{-(3R+4)/2}) = O(T)$.
For the other terms we get the following bound
\[ I_2(l_1, l_2) \ll c_1^{-(l_1+1)/2} c_2^{-(l_2+1)/2} = O(T^{max(l_1,l_2)+1}) = O(T^{3R+3}).\]
For $c_1^2 = 2\log T/R^2+4/T^2$, $c_2^2 = 4/T^2$ we write
\[ M = \frac{9^{R+1} 8\pi}{32^R} \int \limits_{0}^{\infty}  \int \limits_{0}^{\beta_2} \exp \left(-c_1^2 \beta_1^2 - c_2^2 \beta_1 \beta_2 - c_2^2 \beta_2^2 \right) \beta_2^{3R+3} d\beta_1 d\beta_2.\]
Performing a change of variables on the innermost integral, we get
\[ M = \frac{9^{R+1} 8\pi}{32^R c_1} \int \limits_{0}^{\infty}  \int \limits_{\beta_2 \frac{c_2^2}{2c_1}}^{\beta_2 \frac{2+c_2^2}{2c_1}} \exp \left(-\beta_1^2 - \left(c_2^2 - \frac{c_2^4}{4c_1^2} \right) \beta_2^2 \right) \beta_2^{3R+3} d\beta_1 d\beta_2.\]
We then use the following bounds
\[ \int_0^{x} e^{-t^2} dt \ll x \]
\[ \int_x^{\infty} e^{-t^2} dt \ll \frac{e^{-x^2}}{x}\]
to obtain
\[ M = \frac{9^{R+1} 4\pi^{3/2}}{32^R c_1} \int \limits_{0}^{\infty}  \exp \left(-\left(c_2^2 - \frac{c_2^4}{4c_1^2} \right) \beta_2^2 \right) \beta_2^{3R+3} \left(1 + O\left(\beta_2 \frac{c_2^2}{2c_1} \right) \right) d\beta_2\]
\[ + \int \limits_{0}^{\infty}  \exp \left(-\left(c_2^2 + \frac{1}{c_1^2} \right) \beta_2^2 \right) \beta_2^{3R+2} O\left( \frac{1}{(1+c_2^2)} \right) d\beta_2\]
\[ = \frac{9^{R+1} 2 \pi^2}{32^R} \frac{\left(c_2^2 - \frac{c_2^4}{4c_1^2} \right)^{-(3R+4)/2}}{c_1} + O\left(\frac{c_2^2 \left(c_2^2 - \frac{c_2^4}{4c_1^2} \right)^{-(3R+5)/2}}{c_1^2} \right) + O\left(\frac{\left(c_2^2 + \frac{1}{c_1^2} \right)^{-(3R+3)/2}}{1+c_2^2} \right) \]
\[ = \frac{3^{3R+2} \pi^2 R}{2^{8R + 7/2}} \frac{T^{3R+4}}{\sqrt{\log T}} + O(T^{3R+3}).\]

Combining the bounds for the Kloosterman terms (\ref{kloostbounds}), the bounds for the Eisenstein terms (\ref{eisensteinbounds}) and the computation of the main term we get (\ref{mainthm}).

\newpage

\section*{Acknowledgments}

The author would like to thank his advisor Dorian Goldfeld for his guidance and helpful discussions, as well as for the comments on the earlier drafts of this work.
The author was partially supported by FCT doctoral grant SFRH/BD/68772/2010.



\end{document}